\documentclass[]{amsart}

\usepackage{graphicx} 
\usepackage{tabularx}
\usepackage[margin=4cm]{geometry}

\usepackage{lmodern}
\usepackage[T1]{fontenc}
\usepackage[utf8]{inputenc}

\usepackage{tocloft}
\usepackage{amssymb, amsmath, amsthm, amsfonts}
\usepackage{physics}
\usepackage{mathrsfs}
\usepackage{upgreek}
\usepackage{hyperref}

\usepackage{fancyhdr}
\usepackage{titlesec}

\titleformat{\section}[block]
  {\centering\bfseries\Large}
  {\thesection.}{1em}{}

\titleformat{\subsection}[block]
  {\centering\bfseries\normalsize}
  {\thesubsection}{1em}{}

\titleformat{\subsubsection}[runin]
  {\bfseries}
  {\thesubsubsection}{1em}{}

\titlespacing*{\section}{0pt}{1.5ex plus 0.5ex minus 0.2ex}{1ex}

\newtheorem{theorem}{Theorem}

\newtheorem{lemma}{Lemma}

\newtheorem{remark}{Remark}
\newtheorem{proposition}{Proposition}

\newcommand{\C}{\mathbb{C}}

\newcommand{\N}{\mathbb{N}}

\newcommand{\calh}{\mathcal{H}}
\newcommand{\calk}{\mathcal{K}}
\newcommand{\ball}{\text{Ball}}

\newcommand{\cala}{\mathcal{A}}
\newcommand{\cald}{\mathcal{D}}

\title{A Separable Hilbertian Operator Space with CBAP and No Completely Bounded Basis}
\author{Dominique Guillot \and Nicolas Cutrona}
\date{\today}

\keywords{operator spaces; completely bounded approximation property; completely bounded bases; completely bounded frames; Hilbertian operator spaces; completely bounded maps; $\Pi_2$-hyperreflexivity.}
\subjclass[2020]{46L07 (primary), 46B28, 47L25 (secondary)}

\begin{document}

\begin{abstract}
Liu and Ruan showed that a separable operator space has the completely bounded approximation property if and only if it admits a completely bounded frame. We show that, even for Hilbertian operator spaces, such a frame need not give rise to a completely bounded basis. More precisely, we construct a separable Hilbertian operator space with the $1$-CBAP but with no completely bounded basis. Our construction relies on a theorem of Oikhberg that provides a structural description of the completely bounded maps on a suitable operator space.
\end{abstract}

\maketitle

\section{Introduction}
Approximation properties have played a fundamental role in the structure
theory of Banach spaces, where one studies to what extent the identity map can
be approximated by finite-rank operators.  In the operator-space setting, the
analogous questions are formulated using completely bounded maps. Recall that an \emph{operator space} is a Banach space $X$ equipped with a
compatible sequence of matrix norms on $M_n(X)$, $n\geq 1$, satisfying Ruan's axioms:
\begin{enumerate}
    \item[(R1)] If $x\in M_n(X)$ and $\alpha,\beta\in M_n$, then
    \[
    \|\alpha x\beta\|_{M_n(X)}
    \leq
    \|\alpha\|_{M_n}\,\|x\|_{M_n(X)}\,\|\beta\|_{M_n}.
    \]

    \item[(R2)] If $x\in M_n(X)$ and $y\in M_m(X)$, then
    \[
    \|x\oplus y\|_{M_{n+m}(X)}
    =
    \max\{\|x\|_{M_n(X)},\|y\|_{M_m(X)}\}.
    \]
\end{enumerate}
If $X$ and $Y$
are operator spaces and $T:X\to Y$ is linear, its $n$th amplification is
defined by
\[
    T^{(n)}:M_n(X)\longrightarrow M_n(Y),
    \qquad
    T^{(n)}([x_{i,j}])=[T(x_{i,j})].
\]
We say that $T$ is \emph{completely bounded} if
\[
    \|T\|_{\mathrm{cb}}
    :=
    \sup_{n\geq1}
    \|T^{(n)}\|_{B(M_n(X),M_n(Y))}
    <\infty,
\]
where $B(X,Y)$ denotes the space of bounded linear operators from Banach spaces $X$ to $Y$. See e.g.~\cite{EffrosRuan1990, EffrosRuan2000, Paulsen2002, Pisier2003} for more details about operator spaces.

An operator space $X$ is said to have the \emph{completely bounded
approximation property} (CBAP) if there exists a net of finite-rank operators
$T_\alpha:X\to X$ such that
\[
    T_\alpha x\longrightarrow x
    \qquad (x\in X)
\]
and
\[
    \sup_\alpha\|T_\alpha\|_{\mathrm{cb}}<\infty.
\]
If $\sup_{\alpha}\norm{T_\alpha}_{\mathrm{cb}} \le C$, we say that $X$ has the $C$-CBAP. In particular, if $C \le 1$, we say that $X$ has the $1$-CBAP or the \textit{completely contractive approximation property}. Thus, the CBAP is the natural operator-space analogue of the bounded
approximation property for Banach spaces.

A more rigid form of approximation is obtained from a basis.  The notion of
a completely bounded (cb) basis was introduced and studied by Junge, Nielsen,
Ruan, and Xu \cite{JungeNielsenRuanXu2004}.  A separable operator space $X$
is said to have a \emph{cb-basis} if $X$ has a Schauder basis
$\{e_n\}_{n\geq1}$ whose natural partial-sum projections
\[
    P_m\left(\sum_{n\geq1}\alpha_ne_n\right)
    =
    \sum_{n=1}^m\alpha_ne_n
\]
satisfy
\[
    \sup_{m\geq1}\|P_m\|_{\mathrm{cb}}<\infty.
\]
Since each $P_m$ has finite rank and $P_mx\to x$ for every $x\in X$, every
operator space with a cb-basis has the CBAP.  Such cb-bases have been shown
to exist in several important classes of operator spaces.  In particular,
Junge, Nielsen, Ruan, and Xu proved that every separable nuclear
$C^*$-algebra admits a cb-basis.

Frames provide a less rigid form of reconstruction.  In Banach-space theory,
Schauder frames allow expansions of the form
\[
    x=\sum_{n\geq1}f_n(x)x_n, \qquad x_n \in X, \: f_n \in X^*,
\]
without requiring the sequence $\{x_n\}_{n \ge 1}$ to form a Schauder basis.  Such
redundancy makes frames particularly well suited to approximation-theoretic
questions; see, for example, Casazza, Han, and Larson
\cite{CasazzaHanLarson1999}.

Motivated by this classical framework, Liu and Ruan
\cite{LiuRuan2016} introduced the notion of a cb-frame for operator
spaces.  A sequence
\[
    \{(x_n,f_n)\}_{n\geq1}\subset X\times X^*
\]
is a \emph{cb-frame} if
\[
    x=\sum_{n\geq1}f_n(x)x_n
    \qquad (x\in X)
\]
and the associated partial reconstruction operators
\[
    S_mx:=\sum_{n=1}^m f_n(x)x_n
\]
satisfy
\[
    \sup_{m\geq1}\|S_m\|_{\mathrm{cb}}<\infty.
\]

A fundamental result of Liu and Ruan shows that, for separable operator
spaces, cb-frames characterize the CBAP:
\[
    X\text{ has the CBAP}
    \quad\Longleftrightarrow\quad
    X\text{ admits a cb-frame}.
\]
In fact, they prove the stronger equivalence that these conditions hold if
and only if $X$ is completely isomorphic to a completely complemented
subspace of an operator space with a cb-basis.  Their result identifies
cb-frames as the natural reconstruction-theoretic manifestation of the
CBAP.

Following Liu and Ruan's work, it is natural to ask whether, under additional geometric assumptions, the existence of a cb-frame forces the existence of a cb-basis.
For arbitrary operator spaces, the answer is negative for reasons already
visible in Banach-space theory.  Indeed, as shown by Szarek, there exist separable Banach spaces
with the bounded approximation property but without a Schauder basis \cite{Szarek1987}.  If
$E$ is such a Banach space and $\operatorname{MIN}(E)$ is its minimal
operator-space structure, then bounded maps into $\operatorname{MIN}(E)$ are
automatically completely bounded with the same norm.  Hence
$\operatorname{MIN}(E)$ has the CBAP whenever $E$ has the bounded
approximation property.  Since the underlying Banach space $E$ has no
Schauder basis, $\operatorname{MIN}(E)$ cannot admit a cb-basis.  Thus,
in general, the CBAP does not imply the existence of a cb-basis.

The above barrier disappears for Hilbert spaces.  Every separable
Hilbert space has a countable orthonormal basis, and hence a Schauder basis.  It is
therefore natural to ask whether the additional Hilbert-space geometry
eliminates the obstruction at the completely bounded level.  More precisely,
if $X$ is a separable \emph{Hilbertian operator space}, meaning that its underlying
Banach space is isometric to a Hilbert space, it is natural to ask whether the CBAP
forces $X$ to admit a cb-basis. The main result of this paper shows that this is not the case.

\begin{theorem}\label{mainresult}
There exists a separable operator space $X$ whose underlying Banach space is
isometric to an infinite-dimensional Hilbert space such that:
\begin{enumerate}
    \item $X$ has the $1$-CBAP;
    \item $X$ does not have a cb-basis.
\end{enumerate}
\end{theorem}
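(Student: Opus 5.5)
We will build $X$ from Oikhberg's operator space, whose completely bounded maps are all \emph{scalar plus small}. Oikhberg constructs a Hilbertian operator space $Z$ in which every completely bounded map $T\colon Z\to Z$ has the form
\[
T=\lambda I+S,
\]
with $\lambda\in\C$ and $S$ belonging to a small ideal. Typically $S$ is Hilbert--Schmidt, or at least compact with norm controlled by $\|T\|_{\mathrm{cb}}$. This is the $\Pi_2$-hyperreflexivity phenomenon mentioned in the keywords. We take $X$ to be an infinite-dimensional, separable, Hilbertian operator space of this type, modified if necessary so that it has the $1$-CBAP. The natural choice is an $\ell_2$-direct sum, or a suitable interpolation, of finite-dimensional blocks $E_k$. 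Each $E_k$ carries an operator space structure on $\ell_2^{n_k}$ that is ``Oikhberg-rigid'', meaning that its only cb-maps of small cb-norm are near scalar multiples of the identity. The blocks are glued so that the block projections $Q_N=\sum_{k\le N}P_{E_k}$ are complete contractions.

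\textbf{Step 1 (1-CBAP).} We verify that the block projections $Q_N$ are completely contractive. We arrange this by defining the matrix norms on $M_n(X)$ as the $\ell_2$-type sum, or better the $\ell_\infty$-amalgamated sum, of the block matrix norms, in a way compatible with Ruan's axioms and with the underlying Hilbert norm. Since $Q_N$ has finite rank and $Q_Nx\to x$, this gives the $1$-CBAP. The underlying Banach space is $\ell_2$-isometric by construction. If the $\ell_\infty$-type gluing breaks the Hilbertian isometry, we instead use the Oikhberg-type structure directly. In that case the 1-CBAP follows from the existence of a cb-frame built blockwise, invoking the theorem of Liu and Ruan in the converse direction only as a sanity check.

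\textbf{Step 2 (structure of cb-maps).} We invoke Oikhberg's theorem to show that every $T\in CB(X)$ satisfies $T=D+S$. Here $D=\sum_k\lambda_kP_{E_k}$ acts blockwise as scalars, and $S$ is compact with $\|S\|\le C\|T\|_{\mathrm{cb}}$. Moreover, within each large block the compact part has small Hilbert--Schmidt norm relative to $\dim E_k$.

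\textbf{Step 3 (no cb-basis).} Suppose $(e_n)$ is a cb-basis with partial sum projections $P_m$ and $\sup_m\|P_m\|_{\mathrm{cb}}\le K$. By Step 2, $P_m=D_m+S_m$. Since $P_m$ has finite rank, $D_m$ is essentially finite rank, so $\lambda_k^{(m)}\to0$ as $k\to\infty$. Since $P_m\to I$ strongly, for each fixed $k$ we have $\lambda_k^{(m)}\to1$ as $m\to\infty$. Choose the sequence $m$ at which $P_{m+1}-P_m$, a rank-one cb-map, changes the block scalars. A rank-one map has block scalar part of size at most $C/\sqrt{\dim E_k}$ on large blocks, because its Hilbert--Schmidt/$\Pi_2$ norm is bounded by its cb-norm, which is at most $2K$. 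So the scalars $\lambda_k^{(m)}$ can change only by small jumps as $m$ increases.

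Consider a block $E_k$ of dimension $n_k$. For some intermediate $m$ we get $\lambda_k^{(m)}\approx 1/2$, so $P_m|_{E_k}\approx \tfrac12 I+S$. Then $P_m^2=P_m$ forces $\lambda^2+\ldots=\lambda$ modulo the small ideal. Concretely, restricting to $E_k$ and computing the trace, or using that an idempotent minus $\tfrac12 I$ squares to $\tfrac14 I$, we find that $S$ has $\Pi_2$-norm of order $\sqrt{n_k}$. Hence $\|P_m\|_{\mathrm{cb}}\gtrsim c\sqrt{n_k}\to\infty$, a contradiction.

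The main obstacle is Step 3, specifically the quantitative, blockwise form of Oikhberg's theorem. We need the cb-norm to dominate the distance to block-scalars in a $\Pi_2$ sense with constants uniform in $k$. We also need to handle that $P_m$ need not respect the blocks. We would first try to compress $P_m$ to $E_k$, namely $P_{E_k}P_mP_{E_k}$, which is again a cb-map. Its idempotence defect is controlled by the off-diagonal compact parts, and we would apply the finite-dimensional Oikhberg estimate to that compression.
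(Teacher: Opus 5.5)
\textbf{Gap: the construction of $X$.} Your Steps 1--2 carry essentially all of the work, and the construction you sketch does not deliver them.
\begin{itemize}
\item An $\ell_\infty$-type amalgamation of the blocks is not Hilbertian at the first matrix level.
\item For an $\ell_2$-type gluing of ``Oikhberg-rigid'' finite-dimensional blocks, nothing controls maps $E_j\to E_k$ with $j\neq k$. The rigidity of each block, with constants uniform in $k$, is itself only assumed. Rigidity of each block separately does not by itself give $CB(X)\subset\{\text{block scalars}\}+S_2$ for the sum.
\item A single Oikhberg space $Z$ with $CB(Z)=\C I+S_2$ does not help either. For finite-rank $T=\lambda I+S$, infinite dimensionality forces $\lambda=0$, so such a $Z$ fails the CBAP. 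The ``modification'' you leave open is exactly the point.
\item The fallback via a blockwise cb-frame presupposes the very cb-estimates that are in question.
\end{itemize}

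\textbf{The missing idea.} Let an algebra prescribe the cb-maps: apply Oikhberg's representation theorem directly to the block-scalar algebra $\cald=\{\bigoplus_k\lambda_kI_{\calh_k}\}$ on $\calh=\bigoplus_k\calh_k$. This gives both halves at once.
\begin{itemize}
\item Every contraction in $\cald$ is a complete contraction, in particular $R_N=I_{\calh_1}\oplus\cdots\oplus I_{\calh_N}\oplus0\oplus\cdots$. So the $1$-CBAP is immediate.
\item Every $T\in CB(X)$ splits as $T=D+S$ with $D\in\cald$, $\norm{D}\le C_0\norm{T}_{\mathrm{cb}}$, and a \emph{global} bound $\norm{S}_2\le C_1\norm{T}_{\mathrm{cb}}$. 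This $S$ absorbs all off-diagonal blocks, so the compression issue you raise at the end never arises.
\end{itemize}
The substantive input, absent from your proposal, is checking the hypotheses. First, $\cald$ must be $\Pi_2$-hyperreflexive. The paper handles off-diagonal blocks by averaging $\norm{P_STQ_S}_2^2$ over subsets of blocks. It handles diagonal blocks with a half-rank compression lemma for traceless matrices, $\norm{PA(I-P)}_2^2\ge\frac14\norm{A}_2^2$. Second, $\ball(\cald)$ must be weak$^*$ closed. Note also that your Step 2 as stated ($S$ compact with operator-norm control) is too weak: Step 3 needs the Hilbert--Schmidt bound.

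\textbf{Your Step 3, given the decomposition.} Granting the paper's decomposition, your Step 3 becomes a valid argument that differs from the paper's, after two repairs. Because the splitting is not unique, you cannot assert $\lambda_k^{(m)}\to1$. Nor can you bound jumps by decomposing $P_{m+1}-P_m$ separately. Instead, compress to $\calh_k$; this gives:
\begin{itemize}
\item $\sqrt{d_k}\,|\lambda_k^{(1)}|\le K+C_1K$;
\item eventually $\sqrt{d_k}\,|\lambda_k^{(m)}-1|\le 1+C_1K$;
\item $\sqrt{d_k}\,|\lambda_k^{(m+1)}-\lambda_k^{(m)}|\le 2K+2C_1K$, since $D_{m+1}-D_m=(P_{m+1}-P_m)-(S_{m+1}-S_m)$ with $P_{m+1}-P_m$ of rank one.
\end{itemize}
Hence some $\lambda_k^{(m)}$ has real part within $O(d_k^{-1/2})$ of $\frac12$. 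Once $d_k$ is large, this contradicts $\sqrt{d_k}\,|\lambda_k^{(m)}(\lambda_k^{(m)}-1)|\le\norm{D_m^2-D_m}_2\le\Theta$.

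The paper argues differently at this point:
\begin{itemize}
\item It rounds $D_m$ to block projections $Q_m$ with $\norm{P_m-Q_m}_2\le R$.
\item This gives $|m-\operatorname{rank}Q_m|\le R^2$, with $\operatorname{rank}Q_m$ a subset sum of the $d_k$.
\item The lacunary choice $d_k=4^k$ then produces arbitrarily long gaps among these subset sums, a contradiction.
\end{itemize}
Your intermediate-value argument needs only $\sup_k d_k=\infty$, so it is slightly more robust. It becomes a proof only once $X$ has actually been constructed as above.
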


Our construction is based on work of Oikhberg
\cite{Oikhberg2005, Oikhberg2007, Oikhberg2009} and Oikhberg–Ricard \cite{OikhbergRicard2004} which produces operator spaces with structured completely bounded maps.  We apply Oikhberg's theorem \cite[Theorem~3.1]{Oikhberg2009} to a particular
block-diagonal algebra on a separable Hilbert space. As we show, Oikhberg's construction yields an operator space having the $1$-CBAP whose completely
bounded maps structure forces every uniformly cb-family of finite-rank idempotents
to remain uniformly close, in Hilbert--Schmidt norm, to a family of block projections.  By choosing the block dimensions with sufficiently large
gaps, we prove that such projections cannot have every possible finite rank.
This rules out the existence of a cb-basis.

\section{The Construction}

Before proving Theorem \ref{mainresult}, we recall some tools that we will need along the way.

\subsection{Hilbert-Schmidt Class and Absolutely 2-summing Operators}

Let $E,F$ be Banach spaces and let $B(E,F)$ be the set of bounded linear maps from $E$ to $F$. Recall that $T \in B(E,F)$ is \textit{absolutely 2-summing} if there exists a constant $C$ such that for every finite collection $x_1,\ldots, x_m \in E$ 
\begin{equation}\label{2sum}
        \left(\sum^m_{i=1}\norm{Tx_i}^2_{F}\right)^{1/2} \le C\sup_{\varphi \in \ball(E^*)}\left(\sum^m_{i=1}|\varphi(x_i)|^2\right)^{1/2}.
\end{equation}
We denote the space of absolutely 2-summing operators by $\Pi_2(E,F)$, equipped with a norm $\pi_2(\cdot)$ defined to be the smallest constant $C$ such that (\ref{2sum}) holds. 

Let $\calh, \calk$ be Hilbert spaces and let $\{e_i\}_{i \in I}$ be an orthonormal basis for $\calh$. The \textit{Hilbert-Schmidt class} is the family of operators $S_2(\calh, \calk) = \{T \in B(\calh, \calk): \sum_{i \in I}\norm{Te_i}_\calk^2 < \infty\}$ with Hilbert-Schmidt norm $\norm{T}_{2} = (\sum_{i \in I}\norm{Te_i}_\calk^2)^{1/2}$. Recall that, for Hilbert spaces, the absolutely $2$-summing operators coincide with the Hilbert--Schmidt class. This will allow us, when discussing $\Pi_2$-hyperreflexivity, to work with the Hilbert-Schmidt norm. 

\begin{proposition}[{\cite[Chapter~4]{DiestelJarchowTonge1995}}]\label{prop2sumpi}
    If $\calh, \calk$ are Hilbert spaces, then
    \[
    \Pi_2(\calh, \calk) = S_2(\calh, \calk),
    \]
    and 
    \[
        \pi_2(T) = \norm{T}_2.
    \]
\end{proposition}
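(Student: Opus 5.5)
The identity $\Pi_2(\calh,\calk)=S_2(\calh,\calk)$ with $\pi_2(T)=\norm{T}_2$ will follow from two inequalities, $\pi_2(T)\le\norm{T}_2$ for Hilbert--Schmidt $T$ and $\norm{T}_2\le\pi_2(T)$ for $2$-summing $T$. Together they give both the set equality and the equality of norms. Throughout, $\ball(\calh^*)$ is identified with the unit ball of $\calh$ via the Riesz representation, so the right-hand side of (\ref{2sum}) becomes $\sup_{\norm{y}\le1}\bigl(\sum_i|\langle x_i,y\rangle|^2\bigr)^{1/2}$. This quantity is exactly the operator norm of the map $\C^m\ni(c_i)\mapsto\sum_i c_i x_i$, equivalently of its adjoint $y\mapsto(\langle y,x_i\rangle)_i$.

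\textbf{Step 1: $\norm{T}_2\le\pi_2(T)$.} Take $T\in\Pi_2(\calh,\calk)$ and any finite orthonormal family $e_1,\dots,e_m$. By Bessel's inequality, $\sum_i|\langle e_i,y\rangle|^2\le\norm{y}^2\le1$ for $\norm{y}\le1$, so (\ref{2sum}) gives $\sum_{i=1}^m\norm{Te_i}^2\le\pi_2(T)^2$. Taking the supremum over finite subsets of an orthonormal basis $\{e_i\}_{i\in I}$ yields $\norm{T}_2\le\pi_2(T)$. In particular $T\in S_2(\calh,\calk)$.

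\textbf{Step 2: $\pi_2(T)\le\norm{T}_2$.} Take $T\in S_2(\calh,\calk)$ and vectors $x_1,\dots,x_m\in\calh$. Let $A:\C^m\to\calh$ be $A(c)=\sum_i c_ix_i$, so that $x_i=A\delta_i$ and $\norm{A}$ equals the right-hand side of (\ref{2sum}) with $C=1$. Then
\[
\sum_{i=1}^m\norm{Tx_i}^2=\norm{TA}_2^2\le\norm{T}_2^2\norm{A}^2 .
\]
The middle expression is the Hilbert--Schmidt norm of $TA$ computed in the standard basis of $\C^m$. The inequality is the ideal property $\norm{TA}_2=\norm{A^*T^*}_2\le\norm{A^*}\norm{T^*}_2$. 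Here one uses $\norm{S}_2=\norm{S^*}_2$, which follows by expanding $\sum_{i,j}|\langle Se_i,f_j\rangle|^2$ in orthonormal bases of both spaces, and $\norm{BS}_2\le\norm{B}\norm{S}_2$, which is immediate from the definition. This gives (\ref{2sum}) with $C=\norm{T}_2$.

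The only point needing care is the basis-independence and adjoint symmetry of $\norm{\cdot}_2$ used in Step 2. This is the standard Parseval double-sum computation. The remaining verifications are routine.
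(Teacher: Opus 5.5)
Your proof is correct and self-contained. The paper does not prove this proposition; it quotes it from Chapter~4 of Diestel--Jarchow--Tonge. So the relevant comparison is with the usual textbook argument rather than with anything in the paper.

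Your Step~1 is the standard one: Bessel's inequality shows that an orthonormal family has weak $2$-norm at most $1$.

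For the inclusion $S_2(\calh,\calk)\subseteq\Pi_2(\calh,\calk)$, a common textbook route goes through compactness. One takes the Schmidt decomposition $T=\sum_n\tau_n\langle\cdot,u_n\rangle v_n$ and factors $T$ through a diagonal operator $\ell_\infty\to\ell_2$ whose $2$-summing norm is $\norm{\tau}_{\ell_2}=\norm{T}_2$. Equivalently, one writes down a Pietsch domination measure for $T$.

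You argue differently. The weak $2$-norm of $(x_i)$ is the operator norm of the synthesis map $A:\ell_2^m\to\calh$, and $\sum_i\norm{Tx_i}^2=\norm{TA}_2^2$. So the $2$-summing inequality is exactly the right-ideal property $\norm{TA}_2\le\norm{T}_2\norm{A}$ of the Hilbert--Schmidt class.

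Your route is more elementary: it needs only Parseval's identity, with no spectral theory, compactness, or Pietsch theorem. It also gives a small bonus. The double-sum computation behind $\norm{S}_2=\norm{S^*}_2$ shows that $\norm{\cdot}_2$ does not depend on the chosen basis, which the paper's basis-dependent definition of $S_2(\calh,\calk)$ tacitly assumes.

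The factorization/Pietsch viewpoint gives something in exchange. It places the result in the general theory of $p$-summing operators. Combined with Gaussian averaging, it also yields $\Pi_p(\calh,\calk)=S_2(\calh,\calk)$ for other values of $p$, with equivalent rather than equal norms when $p\neq2$.
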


\subsection{Oikhberg's Theorem}

Let $X$ be a Banach space and let $\mathcal A\subseteq B(X)$ be a linear subspace. For $T\in B(X)$, define
\[
    \alpha_{\mathcal A}(T)
    :=
    \sup \left\{
        \pi_2(qTi):
        \begin{array}{l}
        i:E\to X \text{ is an isometric embedding},\\
        q:X\to F \text{ is a quotient map},\\
        E,F \text{ are finite-dimensional},\\
        qAi=0 \text{ for every } A\in\mathcal A
        \end{array}
    \right\}.
\]
We say that $\mathcal A$ is $C$-$\Pi_2$-hyperreflexive \cite{Oikhberg2007, Oikhberg2009} if
\[
    \inf_{A\in\mathcal A}\pi_2(T-A)
    \leq
    C\,\alpha_{\mathcal A}(T)
\]
for every $T\in B(X)$ for which
$\alpha_{\mathcal A}(T)<\infty$.

We  now state Oikhberg's Theorem.

\begin{theorem}[{see \cite[Theorem~3.1, Proposition~2.1]{Oikhberg2009}}]\label{oikgeneral}
Suppose $E$ is a Banach space with separable dual, and $\mathcal{A} \subset B(E)$ is a subspace such that $\normalfont \ball(\cala)$ is a balanced convex subset of $\normalfont \ball(B(E))$, closed in the topology
induced by $E\widehat{\otimes}_\pi E^*$, containing the identity, and such
that $ab\in\mathcal{A}$ whenever $a,b\in\mathcal{A}$.

Suppose, furthermore, that $\mathcal{A}$ is
$C$--$\Pi_2$-hyperreflexive. Then there exists an operator space $X$,
isometric to $E$, such that
\[
    CB(X)=\mathbb{C}\mathcal{A}+\Pi_2(E).
\]
More precisely:
\begin{enumerate}
    \item If $a\in \ball(\mathcal{A})$, then
    \[
        \|a\|_{\mathrm{cb}}\leq 1.
    \]
    \item If $S\in\Pi_2(X)$, then
    \[
        \|S\|_{\mathrm{cb}}\leq \pi_2(S).
    \]

    \item If $T\in CB(X)$, then there exist $S\in\Pi_2(X)$,
    $a\in\ball(\mathcal{A})$, and $\lambda\in\mathbb{C}$ such that
    \[
        T=\lambda a+S,
    \]
    with
    \[
        |\lambda|
        \leq
        4(C+1)\|T\|_{\mathrm{cb}}
    \]
    and
    \[
        \pi_2(S)
        \leq
        4C\|T\|_{\mathrm{cb}}.
    \]
\end{enumerate}
\end{theorem}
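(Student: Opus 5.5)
The plan is to build the operator space structure on $E$ directly, so that the elements of $\mathcal A$ become complete contractions essentially by construction. Everything else is then forced to be close to a $2$-summing map, and this closeness is measured by $\alpha_{\mathcal A}$. Since $E^*$ is separable, I fix a countable family $\{u_k\}_{k\ge1}$ of finite-rank operators $u_k:E\to\ell_2^{n_k}$ with $\pi_2(u_k)\le 1$. I choose it norming in the sense that
\[
\pi_2(S)=\sup_k\,\sup\{\pi_2(q S i)\}
\]
can be tested against the $u_k$'s after compression. I also fix, on each $\ell_2^{n_k}$, a ``generic'' Hilbertian operator space structure $G_k$, for instance Pisier-type random or Gaussian structures. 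On these, every map is cb with norm comparable to its Hilbert--Schmidt norm, but no better.

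\textbf{Definition of $X$ and properties (1), (2).} I then define the matrix norms by
\[
\|x\|_{M_n(X)}=\max\Bigl\{\|x\|_{M_n(\operatorname{MIN}(E))},\ \sup_{k}\ \sup_{a\in\ball(\mathcal A)}\bigl\|(u_k a)^{(n)}(x)\bigr\|_{M_n(G_k)}\Bigr\}.
\]
This is a supremum of operator space seminorms dominating the MIN norm and dominated by $\|x\|$, using $\|u_ka\|\le\pi_2(u_ka)\le1$. Hence Ruan's axioms hold and $X$ is isometric to $E$. Property (1) follows from $\ball(\mathcal A)\cdot\ball(\mathcal A)\subset\ball(\mathcal A)$: composing with $b\in\ball(\mathcal A)$ maps the index set $\{u_ka\}$ into itself, and $b$ is contractive on the MIN part. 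For (2), I factor $S\in\Pi_2(X)$ through a Hilbert space via Pietsch, $S=vw$ with $\|w\|\,\|v\|\le\pi_2(S)$. I realize $w$, up to approximation, as $u_k$ scaled by $\pi_2(S)$, which is possible because the identity lies in $\mathcal A$ and the family $\{u_k\}$ is dense. This makes $w:X\to G_k$ completely bounded with the right constant. I then need $v:G_k\to X$ to be cb with norm $\|v\|$; this holds for the MIN part, and for the other parts it follows from the homogeneity and genericity of the $G_k$'s. The closedness of $\ball(\mathcal A)$ in the $E\widehat\otimes_\pi E^*$ topology is used to pass to limits in these approximations.

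\textbf{Property (3), the main obstacle.} Given $T\in CB(X)$, I want to show $\alpha_{\mathcal A}(T)\le 4\|T\|_{\mathrm{cb}}$. So take $i:F_1\to X$ and $q:X\to F_2$ with $qAi=0$ for all $A\in\mathcal A$. On the range of $i$, viewed through $q$, every term $u_ka$ of the defining norm is killed by the annihilation condition. Only the MIN part and the ``unrelated'' generic pieces survive, and a cb map between such pieces must have Hilbert--Schmidt norm controlled by its cb norm. This is the key estimate. I would try to prove it by the random-matrix or independence argument: the generic structures attached to different $a$'s behave like independent Gaussian structures, which forces $\pi_2(qTi)\lesssim\|T\|_{\mathrm{cb}}$.

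Granting this estimate, $C$-$\Pi_2$-hyperreflexivity gives some $A\in\mathcal A$ with $\pi_2(T-A)\le 4C\|T\|_{\mathrm{cb}}$. I set $S=T-A$ and bound
\[
\|A\|\le\|T\|+\pi_2(S)\le 4(C+1)\|T\|_{\mathrm{cb}}.
\]
Finally I write $A=\lambda a$ with $a\in\ball(\mathcal A)$ and $|\lambda|=\|A\|$, which yields (3). I expect the key estimate in this paragraph, that cb maps between the generic pieces are Hilbert--Schmidt with a uniform bound, to be the genuinely hard step. It is where the specific choice of the $G_k$ matters.
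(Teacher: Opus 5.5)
\textbf{Gap: the key estimate for property (3) is assumed, not proved.} The paper does not prove this statement. It imports it from Oikhberg's Theorem~3.1, using his Proposition~2.1 to turn $C$-$\Pi_2$-hyperreflexivity of $\mathcal{A}$ into the $\Pi_2$-Azoff--Shehada hyperreflexivity of $\operatorname{Ball}(\mathcal{A})$ that Theorem~3.1 actually assumes. You instead try to rebuild Oikhberg's construction, and the substance of the theorem is left unproved.

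Property (1) is fine. It is the formal part: $\operatorname{Ball}(\mathcal{A})$ is closed under products, so precomposition maps your index set into itself.

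Property (3), however, is obtained only by granting $\alpha_{\mathcal{A}}(T)\le 4\|T\|_{\mathrm{cb}}$, and that estimate \emph{is} the theorem. The hyperreflexivity bookkeeping that follows takes two lines. Your heuristic for the estimate is also inaccurate:
\begin{itemize}
\item[(a)] $qAi=0$ does not kill the terms $u_ka$ on $\operatorname{ran} i$. It kills only those with $u_k$ (approximately) factoring through $q$.
\item[(b)] Your remark about independent structures attached to different $a$'s does not match your own definition, where all $a$ share the single space $G_k$.
\end{itemize}
For (2), the factorization you need is $\pi_2(w)\le\pi_2(S)$ with $\|v\|\le1$, not merely $\|w\|\,\|v\|\le\pi_2(S)$.

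\textbf{Why everything hinges on the unestablished choice of the $G_k$.} Suppose all $G_k$ were column spaces. Then $u_ka$ is again a finite-rank map with $\pi_2(u_ka)\le1$, and $\|u:\operatorname{MIN}(E)\to C\|_{\mathrm{cb}}=\pi_2(u)$. So your matrix norms collapse to $\sup_{\pi_2(u)\le1}\|u^{(n)}(x)\|_{M_n(C)}$, which does not see $\mathcal{A}$ at all. Since $\pi_2(uT)\le\|T\|$, every bounded $T$ then has $\|T\|_{\mathrm{cb}}=\|T\|$. Hence (3) fails, for instance for the block algebra $\mathcal{D}$ of this paper.

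To avoid this you need countably many Hilbertian structures satisfying two conditions simultaneously:
\begin{itemize}
\item[(i)] $\|\phi\|_{\mathrm{cb}}\le\pi_2(\phi)$, with constant exactly $1$, for all maps $G_k\to G_{k'}$ (including $k=k'$) and all maps $\operatorname{MIN}(E)\to G_k$. This, not homogeneity, is what your proof of (2) uses, both for $v$ and for approximating $w$ by $u_k$.
\item[(ii)] $\|\phi\|_{\mathrm{cb}}\ge\frac14\pi_2(\phi)$ for maps into $G_k$ from $\operatorname{ran} i$, where $\operatorname{ran} i$ carries the structure induced by the MIN part together with \emph{all} remaining terms $u_{k'}ai$.
\end{itemize}
Condition (ii) is not a pairwise condition. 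The induced structure is a supremum over infinitely many pieces, drawn from a dense family that includes $u_{k'}$ arbitrarily close to $wqT$. A maximum of norms can be completely much closer to $G_k$ than any single piece. For example, $\|\mathrm{id}:R_n\cap C_n\to OH_n\|_{\mathrm{cb}}\le1$, while $\|\mathrm{id}:R_n\to OH_n\|_{\mathrm{cb}}=\|\mathrm{id}:C_n\to OH_n\|_{\mathrm{cb}}=n^{1/4}$.

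Producing a family with (i) and (ii), with these exact constants, is precisely the Oikhberg--Ricard/Oikhberg machinery. Without it, your argument yields (1), a version of (2) conditional on (i), and no (3).
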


\begin{remark}
    \normalfont Theorem \ref{oikgeneral} is a restatement of \cite[Theorem~3.1]{Oikhberg2009}, where $\Pi_2$-Azoff--Shehada hyperreflexivity of the unit ball is replaced with $\Pi_2$-hyperreflexivity, as \cite[Proposition~2.1]{Oikhberg2009} provides the passage from the
$\Pi_2$-hyperreflexivity condition above to that hypothesis.
\end{remark}

The notion of $\Pi_2$-hyperreflexivity simplifies when considering Hilbert spaces. Let $\cala \subseteq B(\calh)$ be a linear subspace, where $\calh$ is a Hilbert space. Using Proposition \ref{prop2sumpi}, for $T \in B(\calh)$ define
\[
\alpha_\cala(T):= \sup \{\norm{u Tv}_2: u,v\text{ finite-rank contractions} \in B(\calh),\: uAv = 0 \text{ for every }A \in \cala\}.
\]
Informally, the quantity $\alpha_\cala(T)$ measures the pieces of $T$ which can be detected in positions where every operator in $\cala$ vanishes. Then $\cala$ is \textit{$\Pi_2$-hyperreflexive} if there exists $C > 0$ such that
\begin{equation}
    \inf_{A \in \cala} \norm{T-A}_{2} \le C\alpha_\cala(T),
\end{equation}
whenever the right-hand side is finite. In this case, we say that $\cala$ is $C$-$\Pi_2$-hyperreflexive. This gives a measure of ``closeness'' of $T$ and $\cala$ in Hilbert-Schmidt norm. 

For our construction, we formulate a special case of Oikhberg's Theorem. Let $\calh$ be a separable Hilbert space. Since $\calh \cong \calh^*$, $\calh^*$ is separable. If we let $\cala \subset B(\calh)$ be a unital $\Pi_2$-hyperreflexive operator algebra, then $\ball(\cala)$ is both convex and balanced. Moreover, suppose that $\ball(\cala)$ is weak$^*$ closed. On $B(\mathcal H)$, the topology induced by $\mathcal H\widehat{\otimes}_\pi\mathcal H^*$ is the weak$^*$ topology $\sigma(B(\mathcal H),S_1(\mathcal H))$. Hence $\operatorname{Ball}(\mathcal A)$ is closed in the topology induced by $\mathcal H\widehat{\otimes}_\pi\mathcal H^*$.
 These facts lead to the following formulation of Theorem \ref{oikgeneral} for Hilbert spaces.

\begin{theorem}[Oikhberg, Hilbert space version]\label{thm:oikhberg-specialized}
Let $\calh$ be a separable Hilbert space and let
$\mathcal A\subset B(\calh)$ be a unital $\Pi_2$-hyperreflexive operator
algebra whose closed unit ball is weak$^*$ closed.

Then there exists an operator-space structure $X$ on $\calh$
such that:
\begin{enumerate}
    \item the level-one norm on $X$ is exactly the original Hilbert-space
    norm;
    \item every $A\in\mathcal A$ with $\|A\|_{B(\calh)}\leq1$ satisfies
    \[
        \|A\|_{\mathrm{cb}}\leq1;
    \]
    \item there are constants $C_0,C_1>0$ such that every
    $T\in CB(X)$ can be written
    \[
        T=A+S,
        \qquad
        A\in\mathcal A,\quad S\in S_2(\calh),
    \]
    with
    \[
        \|A\|_{B(\calh)}\leq C_0\|T\|_{\mathrm{cb}},
        \qquad
        \|S\|_2\leq C_1\|T\|_{\mathrm{cb}}.
    \]
\end{enumerate}
\end{theorem}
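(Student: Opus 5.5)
This is a direct specialization of Theorem \ref{oikgeneral} with $E=\calh$, so the plan is to check its hypotheses and then translate its three conclusions.

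\emph{Hypotheses.} Since $\calh$ is separable and $\calh^*\cong\calh$ (conjugate-linearly), $E^*$ is separable. Because $\cala$ is a linear subspace, $\ball(\cala)$ is convex and balanced. It sits inside $\ball(B(\calh))$ and contains the identity, since $\cala$ is unital and $\|I\|=1$. Closure under products is exactly the statement that $\cala$ is an algebra. For the topological condition, recall that $(\calh\widehat{\otimes}_\pi\calh^*)^*=B(\calh)$ and $\calh\widehat{\otimes}_\pi\calh^*\cong S_1(\calh)$. Hence the topology on $B(\calh)$ induced by this predual is $\sigma(B(\calh),S_1(\calh))$, the weak$^*$ topology, and weak$^*$ closedness of $\ball(\cala)$ gives the required closedness. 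Finally, we must pass from the Hilbert-space form of $\Pi_2$-hyperreflexivity to the Banach form used in Theorem \ref{oikgeneral}.
\begin{itemize}
\item By Proposition \ref{prop2sumpi}, $\pi_2=\|\cdot\|_2$ on $B(\calh)$.
\item An isometric embedding $i:E\to\calh$ of a finite-dimensional space has the form $i=v\circ j$, where $j:E\to\operatorname{ran}(i)$ is a unitary and $v$ is the inclusion of that subspace. Dually, a quotient map $q:\calh\to F$ factors as $q=k\circ u$, where $u$ is the orthogonal projection onto $(\ker q)^\perp$ and $k:(\ker q)^\perp\to F$ is an isometric isomorphism.
\item Note that $F$ is only a Banach space, so $\pi_2(k\,\cdot)$ need not equal $\|\cdot\|_2$. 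Since $F$ is finite-dimensional, however, $\pi_2(kS)\le\pi_2(S)$, and conversely $\pi_2(S)\le\pi_2(kS)$ because $k^{-1}$ is a contraction. Therefore $\pi_2(qTi)=\|uTv\|_2$ up to unitary factors.
\item The condition $qAi=0$ is equivalent to $uAv=0$.
\item Conversely, any finite-rank contractions $u,v$ satisfy $\|uTv\|_2\le\|PTQ\|_2$ for suitable finite-rank projections $P,Q$ with $PAQ=0$ for all $A\in\cala$. To see this, take $P$ the range projection of $u$ and $Q$ the projection onto $(\ker v)^\perp$, and use $u=uP$, $v=Qv$. Then $PAQ=0$ holds on the relevant ranges after shrinking, or one can take the closure.
\end{itemize}
Together these show that the two versions of $\alpha_\cala$ agree, so the two notions of $C$-$\Pi_2$-hyperreflexivity coincide. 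I expect this identification to be the one fiddly point. If the reduction proves awkward, it suffices to show the Banach $\alpha_\cala$ is at least the Hilbert one, since that is the inequality direction hyperreflexivity needs. Proposition 2.1 of \cite{Oikhberg2009} is already built into Theorem \ref{oikgeneral}.

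\emph{Conclusions.} Apply Theorem \ref{oikgeneral} to obtain $X$ isometric to $\calh$. Transporting the structure along the isometry, we may take $X=\calh$ as a Banach space, which gives (1). Part (1) of Theorem \ref{oikgeneral} gives (2) directly. For (3), write $T=\lambda a+S$ and set $A=\lambda a\in\cala$. Then
\[
\|A\|\le|\lambda|\le 4(C+1)\|T\|_{\mathrm{cb}},\qquad \|S\|_2=\pi_2(S)\le 4C\|T\|_{\mathrm{cb}},
\]
so the constants $C_0=4(C+1)$ and $C_1=4C$ work.
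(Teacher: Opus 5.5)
Your overall route is the paper's: it also just checks the hypotheses of Theorem~\ref{oikgeneral} for $E=\calh$ and reads off the conclusions. Your constants $C_0=4(C+1)$ and $C_1=4C$ are exactly what part (3) of Theorem~\ref{oikgeneral} gives.

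The paper does not spell out how the Hilbert-space and Banach-space versions of $\alpha_\cala$ compare; it simply invokes Proposition~\ref{prop2sumpi}. You correctly identify that the direction actually needed is $\alpha^{\mathrm{Hilb}}_\cala(T)\le\alpha^{\mathrm{Ban}}_\cala(T)$. However, your argument for that direction fails as written. With $P$ the range projection of $u$ and $Q$ the projection onto $(\ker v)^\perp$, one has $Pu=u$ and $vQ=v$, not $u=uP$ and $v=Qv$. Consequently neither the bound $\|uTv\|_2\le\|PTQ\|_2$ nor the vanishing $PAQ=0$ follows.

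Here is a concrete counterexample. Take orthonormal $e_1,e_2$ and set $u\xi=\langle\xi,e_2\rangle e_1$, $v\xi=\langle\xi,e_1\rangle e_1$, and $T\xi=\langle\xi,e_1\rangle e_2$. Then $uIv=0$ with $I\in\cala$. But your $P$ and $Q$ both equal the projection onto $\C e_1$, so $PIQ\neq0$. Moreover $\|uTv\|_2=1$ while $PTQ=0$. The phrase ``after shrinking, or one can take the closure'' does not repair this.

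The fix is to swap the roles. Let $P$ be the orthogonal projection onto $(\ker u)^\perp=\operatorname{ran}(u^*)$, and let $Q$ be the orthogonal projection onto $\operatorname{ran}(v)$; both have finite rank.
\begin{enumerate}
\item Then $u=uP$ and $v=Qv$, so $\|uTv\|_2\le\|u\|\,\|PTQ\|_2\,\|v\|\le\|PTQ\|_2$.
\item Since $\ker P=\ker u$ and $\operatorname{ran}Q=\operatorname{ran}v$, the condition $uAv=0$, i.e.\ $A(\operatorname{ran}v)\subset\ker u$, is \emph{equivalent} to $PAQ=0$. No shrinking is needed.
\item Finally, $PTQ$ has the form $qTi$, where $i:\operatorname{ran}Q\hookrightarrow\calh$ is the inclusion (an isometric embedding) and $q=P:\calh\to\operatorname{ran}P$ is a quotient map. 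Also $qAi=0$ for all $A\in\cala$.
\end{enumerate}
Hence $\|uTv\|_2\le\pi_2(qTi)\le\alpha^{\mathrm{Ban}}_\cala(T)$, which is what you need.

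A side remark on the other direction, which is not needed anyway. There $F\cong\calh/\ker q\cong(\ker q)^\perp$ is itself a Hilbert space, so $k$ is unitary. Your ideal-property argument is fine, but the caveat that $F$ is ``only a Banach space'' is unnecessary, and finite-dimensionality plays no role there.
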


\begin{remark}
    \normalfont For our construction, the main content of Theorem \ref{thm:oikhberg-specialized} is $CB(X) \subset \cala + S_2(\calh)$ with quantitative control, while all contractions in $\cala$ become complete contractions.
\end{remark}

\subsection{A Block Algebra}

For each $k \in \N$, let $\calh_k$ be a Hilbert space of dimension $d_k:=4^k$. Let
\[
 \calh= \bigoplus_{k \geq 1} \calh_k,
\]
equipped with the inner product
\[
\langle \zeta, \eta\rangle_\calh = \sum_{k\ge1}\langle \zeta_k, \eta_k\rangle_{\calh_k}, \qquad \zeta = (\zeta_k)_{k=1}^\infty, \eta = (\eta_k)_{k=1}^\infty.
\]
Let $E_k \in B(\calh)$ denote the orthogonal projection onto $\calh_k$. Denoting the identity operator on $\calh_k$ by $I_{\calh_k}$, we define a block algebra
\[
\cald := \left\{  \bigoplus_{k \geq 1} \lambda_kI_{\calh_k}: \{\lambda_k\}_{k\ge 1} \in \ell_\infty \right\}. 
\]
Thus, an operator $D \in \cald$ has the form 
\[
    D=
    \begin{pmatrix}
    \lambda_1 I_{\calh_1} & 0 &0&\cdots\\
    0&\lambda_2 I_{\calh_2}&0&\cdots\\
    0&0&\lambda_3I_{\calh_3}&\cdots\\
    \vdots&\vdots&\vdots&\ddots
    \end{pmatrix}.
\]

We make a few observations about $\cald$. First, it is clear that $\cald$ is a subspace of $B(\calh)$. Even more, $\cald$ is an algebra. Indeed, if $D_1 = \oplus_k\lambda_kI_{\calh_k}$ and $D_2 = \oplus_k \mu_kI_{\calh_k}$, then $D_1D_2 = \oplus_k\lambda_k\mu_kI_{\calh_k} \in \cald$ since $\sup_{k}|\lambda_k\mu_k|$ is finite. 

Next, if $D \in \cald$ is idempotent, then we must have
\[
\lambda_k = \lambda_k^2, \quad k \in \N,
\]
so that $\lambda_k \in \{0,1\}$. In particular, a finite-rank projection in $\cald$ has rank
\[
\sum_{k \in F} 4^k,
\]
for some finite subset $F$ of $\N$. Let
\begin{equation}\label{eq:subset-sum-set}
    \Sigma
    :=
    \left\{
        \sum_{k\in F}4^k:
        F\subset\mathbb N\text{ finite}
    \right\}.
\end{equation}
The set $\Sigma$ has arbitrarily large gaps. This will ultimately be used to contradict the existence of basis projections of every rank.

Next, observe that $\cald$ admits obvious finite rank approximations. Define
\[
    R_N
    =
    I_{\calh_1}\oplus\cdots\oplus I_{\calh_N}
    \oplus0\oplus0\oplus\cdots.
\]
Then
\[
    R_N\in\mathcal D,
    \qquad
    \|R_N\|_{B(\calh)}=1,
\]
and
\[
 \norm{R_N \zeta-\zeta}^2_{\calh} = \sum_{k \ge N+1}\norm{\zeta_k}^2_{\calh_k} \to 0
\]
as $N \to \infty$, i.e., $R_N\zeta \to \zeta$ in $\calh$.

To proceed, we need two lemmas. The first provides a useful bound on the Hilbert-Schmidt norm of traceless operators on a finite-dimensional space. The second shows that our algebra $\cald$ is $\Pi_2$-hyperreflexive.

\begin{lemma}\label{lem:deterministic-half-projection}
Let $A\in M_d(\mathbb C)$ satisfy $\operatorname{Tr}A=0$, where $d$ is even. Then there exists an orthogonal projection $P$ of rank $d/2$ such that
\[
    \|PA(I-P)\|_2^2
    \geq
    \frac{d}{4(d-1)}\|A\|_2^2
    \geq
    \frac14\|A\|_2^2.
\]
\end{lemma}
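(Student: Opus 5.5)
Averaging argument: pick $P$ uniformly at random among rank-$d/2$ orthogonal projections (equivalently $P = U P_0 U^*$ with $U$ Haar-distributed on the unitary group and $P_0$ a fixed rank-$d/2$ projection), compute $\mathbb{E}\|PA(I-P)\|_2^2$ exactly, and conclude that some $P$ is at least the average. To keep the argument deterministic, we can instead average over the finite family of coordinate projections $P_S$ ($|S| = d/2$) in a suitably chosen orthonormal basis. The plan below uses this second, cleaner route.

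\textbf{Step 1: zero-diagonal basis.} Since $\operatorname{Tr}A = 0$, there is a unitary $W$ such that $W^*AW$ has zero diagonal. This is the standard fact that a traceless matrix is unitarily similar to one with zero diagonal, proved by induction: the numerical range contains $\operatorname{Tr}A/d = 0$, so pick a unit vector $x$ with $\langle Ax, x\rangle = 0$, extend $x$ to an orthonormal basis, and recurse on the compression to $x^\perp$, which is again traceless. Replacing $A$ by $W^*AW$ changes neither $\|A\|_2$ nor the set of rank-$d/2$ projections (up to conjugation by $W$). So we may assume $a_{ii} = 0$ for all $i$.

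\textbf{Step 2: average over coordinate projections.} For $S \subset \{1, \dots, d\}$ with $|S| = d/2$, let $P_S$ be the coordinate projection onto $\operatorname{span}\{e_i : i \in S\}$. Then
\[
\|P_S A (I - P_S)\|_2^2 = \sum_{i \in S,\ j \notin S} |a_{ij}|^2 .
\]
Take $S$ uniformly at random. For a fixed ordered pair $(i, j)$ with $i \ne j$,
\[
\Pr(i \in S,\ j \notin S) = \frac{(d/2)(d/2)}{d(d-1)} = \frac{d}{4(d-1)} .
\]
Diagonal entries contribute nothing, since they vanish. Hence the expectation equals
\[
\frac{d}{4(d-1)} \sum_{i \ne j} |a_{ij}|^2 = \frac{d}{4(d-1)} \|A\|_2^2 .
\]
Therefore some $S$ achieves at least this value, giving the first inequality with $P = W P_S W^*$.

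\textbf{Step 3: second inequality.} This is immediate, since $d/(d-1) \ge 1$.

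\textbf{Main obstacle.} The only nontrivial ingredient is Step 1, the zero-diagonal reduction. Without it, the diagonal mass of $A$ would be lost in the averaging and the bound would fail. The induction needs care: the compression of $A$ to $x^\perp$ has trace $\operatorname{Tr}A - \langle Ax, x\rangle = 0$, and we must invoke the convexity of the numerical range (Toeplitz--Hausdorff) to guarantee that $0$ lies in it.
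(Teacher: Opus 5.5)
Your proof is correct and is essentially the paper's argument: a zero-diagonal orthonormal basis obtained via Toeplitz--Hausdorff and induction on traceless compressions, then averaging $\|P_SA(I-P_S)\|_2^2$ over coordinate projections with $|S|=d/2$. Your probability $d/(4(d-1))$ is exactly the paper's counting ratio $\binom{d-2}{r-1}/\binom{d}{r}$ at $r=d/2$.
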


\begin{proof}
We first claim that there is an orthonormal basis $e_1,\ldots,e_d$ of $\mathbb C^d$ such that
\[
    \langle Ae_j,e_j\rangle=0
    \qquad (1\leq j\leq d).
\]
We prove this by induction on $d$. Since $\operatorname{Tr}A=0$, the origin lies in the convex hull of the eigenvalues of $A$. The numerical range
\[
    W(A):=\{\langle Ax,x\rangle:\|x\|=1\}
\]
contains the spectrum of $A$ and is convex by the Toeplitz--Hausdorff theorem (see e.g.~\cite[1.1-2]{GustafsonRao}). Hence $0\in W(A)$, so there is a unit vector $e_1$ such that $\langle Ae_1,e_1\rangle=0$. Let $A_1$ be the compression of $A$ to $e_1^\perp$. Then
\[
    \operatorname{Tr}A_1
    =\operatorname{Tr}A-\langle Ae_1,e_1\rangle
    =0,
\]
and the induction hypothesis applied to $A_1$ completes the construction of the basis.

Write $A=(a_{ij})$ in this basis. Then $a_{ii}=0$ for every $i$, and therefore
\[
    \|A\|_2^2=\sum_{i\neq j}|a_{ij}|^2.
\]
Set $r=d/2$. For each subset $S\subset\{1,\ldots,d\}$ with $|S|=r$, let $P_S$ denote the orthogonal projection onto
\[
    \operatorname{span}\{e_i:i\in S\}.
\]
Then
\[
    \|P_SA(I-P_S)\|_2^2
    =
    \sum_{\substack{i\in S\\j\notin S}}|a_{ij}|^2.
\]
Summing over all \(r\)-element subsets \(S\), each term \(|a_{ij}|^2\) with \(i\neq j\) occurs exactly $\binom{d-2}{r-1}$ times. As a consequence,
\[
    \sum_{|S|=r}\|P_SA(I-P_S)\|_2^2
    =
    \binom{d-2}{r-1}\|A\|_2^2.
\]
Since there are $\binom dr$ such subsets, at least one of them satisfies
\[
    \|P_SA(I-P_S)\|_2^2
    \geq
    \frac{\binom{d-2}{r-1}}{\binom dr}\|A\|_2^2
    =
    \frac{r(d-r)}{d(d-1)}\|A\|_2^2.
\]
Taking $r=d/2$ gives
\[
    \|P_SA(I-P_S)\|_2^2
    \geq
    \frac{d}{4(d-1)}\|A\|_2^2
    \geq
    \frac14\|A\|_2^2,
\]
as desired.
\end{proof}

\begin{lemma}\label{hyperreflexivelemma}
    Let $\cald$ and $\calh$ be defined as above. Then $\cald$ is $\Pi_2$-hyperreflexive.
\end{lemma}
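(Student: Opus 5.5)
Fix $T\in B(\calh)$ with $\alpha_\cald(T)<\infty$. Write $T_{jk}=E_jTE_k$ for its blocks. We will produce $D=\oplus_k\lambda_kI_{\calh_k}\in\cald$ with $\|T-D\|_2\le C\alpha_\cald(T)$ for an absolute constant $C$. The natural choice is $\lambda_k=\operatorname{Tr}(T_{kk})/d_k$. Then $T-D$ has off-diagonal blocks $T_{jk}$ for $j\ne k$ and traceless diagonal blocks $T_{kk}-\lambda_kI_{\calh_k}$, so
\[
\|T-D\|_2^2=\sum_{j\neq k}\|T_{jk}\|_2^2+\sum_k\|T_{kk}-\lambda_kI_{\calh_k}\|_2^2 .
\]
It suffices to bound each of the two sums by a constant times $\alpha_\cald(T)^2$, using finite truncations and letting the truncation grow. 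Boundedness of $D$ is immediate, since $|\lambda_k|\le\|T\|$.

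For the off-diagonal part, fix $N$. Let $F\subset\{1,\dots,N\}$ and set $u=\sum_{j\in F}E_j$ and $v=\sum_{k\in\{1,\dots,N\}\setminus F}E_k$. These are finite-rank contractions with $uDv=0$ for every $D\in\cald$, because $D$ is block diagonal. Hence $\sum_{j\in F,\,k\notin F,\,k\le N}\|T_{jk}\|_2^2=\|uTv\|_2^2\le\alpha_\cald(T)^2$. Average over a uniformly random $F\subset\{1,\dots,N\}$. Each ordered pair $(j,k)$ with $j\ne k$ is separated with probability $1/4$, so
\[
\sum_{j\ne k\le N}\|T_{jk}\|_2^2\le4\alpha_\cald(T)^2 .
\]
Now let $N\to\infty$.

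For the diagonal part, apply Lemma \ref{lem:deterministic-half-projection} to $A_k=T_{kk}-\lambda_kI_{\calh_k}$ on $\calh_k$. This is legitimate because $d_k=4^k$ is even and $A_k$ is traceless. The lemma gives a projection $P_k\le E_k$ of rank $d_k/2$ with $\|P_kA_k(E_k-P_k)\|_2^2\ge\frac14\|A_k\|_2^2$, and $P_kA_k(E_k-P_k)=P_kT(E_k-P_k)$. Put $u=\sum_{k\le N}P_k$ and $v=\sum_{k\le N}(E_k-P_k)$. Then $uDv=\sum_k\lambda_kP_k(E_k-P_k)=0$ for every $D\in\cald$, since $D$ acts as a scalar on each $\calh_k$. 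Therefore $\|uTv\|_2\le\alpha_\cald(T)$.

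The obstacle is that $\|uTv\|_2^2$ contains the cross terms $\|P_jT(E_k-P_k)\|_2^2$ for $j\ne k$ in addition to the diagonal terms we want. Since all terms are nonnegative, $\|uTv\|_2^2\ge\sum_{k\le N}\|P_kT(E_k-P_k)\|_2^2$, so the cross terms only help. We obtain
\[
\sum_{k\le N}\|A_k\|_2^2\le4\|uTv\|_2^2\le4\alpha_\cald(T)^2 .
\]

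Combining the two bounds gives $\|T-D\|_2^2\le8\alpha_\cald(T)^2$. Hence $\inf_{D\in\cald}\|T-D\|_2\le2\sqrt2\,\alpha_\cald(T)$, so $\cald$ is $2\sqrt2$-$\Pi_2$-hyperreflexive.
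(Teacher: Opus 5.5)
Your proof is correct and follows the paper's argument essentially step for step. You use the same choice $\lambda_k=\operatorname{Tr}(T_{kk})/d_k$, and the same averaging over subsets of blocks for the off-diagonal part (your probability $1/4$ is the paper's count $2^{|F|-2}/2^{|F|}$). You also handle the traceless diagonal blocks the same way, via Lemma~\ref{lem:deterministic-half-projection} with block-diagonal half-projections, and you arrive at the same constant $2\sqrt2$.
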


\noindent \textit{Proof of Lemma \ref{hyperreflexivelemma}.} Let $T \in B(\calh)$ with $\alpha_\cald(T) < \infty$ and let
\[
\delta:= \alpha_\cald(T). 
\]
We will show that there exists $D_T \in \cald$ satisfying 
\[
    \norm{T - D_T}_2 \le 2\sqrt{2}\delta.
\]
To prove this, we note that there are exactly two ways $T \in B(\calh)$ can fail to belong to $\cald$:
\begin{enumerate}
    \item $T$ can have nonzero off-diagonal blocks;
    \item The diagonal blocks $T_{kk}$ can fail to be scalar multiples of the identity.
\end{enumerate}
We will estimate these two cases separately.

We first handle the off-diagonal blocks. Recall that $E_k$ is the projection onto $\calh_k$, and let
\[
    T_{i,j}:=E_iTE_j.
\]
Fix a finite set $F\subset\N$ with $|F|\geq2$. For each subset $S\subset F$, define
\[
    P_S:=\sum_{i\in S}E_i,
    \qquad
    Q_S:=\sum_{j\in F\setminus S}E_j.
\]
Since every $D\in\cald$ is block diagonal, we have $P_SDQ_S=0$. Thus, by the definition of $\delta$,
\[
    \|P_STQ_S\|_2\leq\delta.
\]
The blocks $E_iTE_j$ belong to mutually orthogonal Hilbert--Schmidt subspaces, so
\[
    \|P_STQ_S\|_2^2
    =
    \sum_{\substack{i\in S\\j\in F\setminus S}}
    \|T_{i,j}\|_2^2.
\]
Summing this inequality over all subsets $S\subset F$, each ordered pair $(i,j)$ with $i\neq j$ occurs exactly $2^{|F|-2}$ times. Hence
\[
    2^{|F|-2}
    \sum_{\substack{i,j\in F\\i\neq j}}
    \|T_{i,j}\|_2^2
    =
    \sum_{S\subset F}\|P_STQ_S\|_2^2
    \leq
    2^{|F|}\delta^2.
\]
Therefore
\[
    \sum_{\substack{i,j\in F\\i\neq j}}
    \|T_{i,j}\|_2^2
    \leq4\delta^2.
\]
Since this holds for every finite subset $F\subset\mathbb N$, we obtain
\begin{equation}\label{offdiest}
    \sum_{i\neq j}\|T_{i,j}\|_2^2\leq4\delta^2.
\end{equation}
Thus the off-diagonal part of $T$ is Hilbert--Schmidt and uniformly controlled.

Let us now look at the diagonal blocks. For each $k$, define
\[
    \lambda_k
    :=
    \frac{1}{d_k}\operatorname{Tr}(T_{kk})
\]
and
\[
    A_k
    :=
    T_{kk}-\lambda_kI_{\calh_k}.
\]
Then
\[
    \operatorname{Tr}(A_k) = \operatorname{Tr}(T_{kk}) - \lambda_k\operatorname{Tr}(I_{\calh_k}) = \operatorname{Tr}(T_{kk}) - \frac{1}{d_k}\operatorname{Tr}(T_{kk})d_k=0.
\]
We will prove that
\[
\sum_{k \ge1}\norm{A_k}^2_2 \le 4\delta^2.
\]
Indeed, fix a finite set $F\subset\N$. Since $d_k=4^k$ is even and $\operatorname{Tr}(A_k)=0$, Lemma~\ref{lem:deterministic-half-projection} gives, for each $k\in F$, an orthogonal projection $P_k$ of rank $d_k/2$ on $\calh_k$ such that
\[
    \|P_kA_k(I_{\calh_k}-P_k)\|_2^2
    \geq
    \frac14\|A_k\|_2^2.
\]
Define
\[
    P=\left(\bigoplus_{k\in F}P_k\right)\oplus0\oplus\cdots,
    \qquad
    Q=\left(\bigoplus_{k\in F}(I_{\calh_k}-P_k)\right)\oplus0\oplus\cdots.
\]
Then $P$ and $Q$ are finite-rank contractions. For every $D=\bigoplus_{j \geq 1}\mu_jI_{\calh_j}\in\cald$,
\[
    P_k(\mu_kI_{\calh_k})(I_{\calh_k}-P_k)=0
    \qquad(k\in F),
\]
so $PDQ=0$. Hence
\[
    \|PTQ\|_2\leq\alpha_\cald(T)=\delta.
\]
Moreover, for each $k\in F$,
\[
    (PTQ)_{kk}
    =P_kT_{kk}(I_{\calh_k}-P_k)
    =P_kA_k(I_{\calh_k}-P_k).
\]
Regarding each of these operators on $\calh_k$ as an operator on $\calh$ by extending it by zero on $\calh_k^\perp$, the corresponding diagonal blocks lie in mutually orthogonal Hilbert--Schmidt subspaces. Therefore
\[
    \frac14\sum_{k\in F}\|A_k\|_2^2
    \leq
    \sum_{k\in F}\|P_kA_k(I_{\calh_k}-P_k)\|_2^2
    \leq
    \|PTQ\|_2^2
    \leq\delta^2.
\]
Since the above holds for every finite $F \subset \N$, we conclude that
\begin{equation}\label{eq:diagonal-estimate-guide}
    \sum_{k \ge 1}\|A_k\|_2^2
    \leq4\delta^2,
\end{equation}
as desired.

Now, define
\[
D_T:= \bigoplus_{k \geq 1} \lambda_k
I_{\calh_k}.
\]
Using Cauchy--Schwarz and the estimate $\|T_{kk}\|_2 \leq \sqrt{d_k} \|T_{kk}\|_{B(\calh_k)}$, we obtain
\[
|\lambda_k| = \left|\frac{1}{d_k}\operatorname{Tr}(T_{kk})\right| \le \norm{T_{kk}}_{B(\calh_k)} \le \norm{T}_{B(\calh)} < \infty.
\]
Thus $\{\lambda_k\}_{k \ge1} \in \ell_\infty$ and $D_T \in \cald$.

Consider the operator $T - D_T$, which consists of off-diagonal blocks $T_{i,j}$ and traceless diagonal blocks $A_k$. These pieces are mutually orthogonal in the Hilbert-Schmidt norm, and thus from our estimates (\ref{offdiest}) and (\ref{eq:diagonal-estimate-guide}), we obtain
\[
    \norm{T - D_T}^2_2 = \sum_{i\ne j}\norm{T_{i,j}}^2_2 + \sum_{k \ge 1}\norm{A_k}^2_2 \le 8\delta^2.
\]
Thus, $\norm{T - D_T}_2 \le 2\sqrt{2}\delta$, which shows $\cald$ is indeed $\Pi_2$-hyperreflexive. \qed

\subsection{Applying Oikhberg's Theorem}\label{sec:Oikhberg}

Clearly, our choice of $\calh$ yields a separable Hilbert space. Moreover, $\cald \subset B(\calh)$ is a unital $\Pi_2$-hyperreflexive operator algebra by Lemma \ref{hyperreflexivelemma}. To apply Theorem \ref{thm:oikhberg-specialized}, we need to show that $\ball(\cald)$ is weak$^*$ closed. We know that $\ball(B(\calh))$ is weak$^*$ closed; hence, if we can show that $\cald$ is weak$^*$ closed, this will immediately imply that $\ball(\cald) = \cald \cap \ball(B(\calh))$ is weak$^*$ closed. 

Indeed, let $(D_\alpha)_\alpha \subset \cald$ be a net such that $D_\alpha \overset{w^*}{\to} T$. We need to show that $T \in \cald$. We first show that $T$ has no nonzero off-diagonal elements. Fix $i \ne j$, and let $\zeta\in\calh_j$ and $\eta\in\calh_i$, where we identify $\calh_i$ and $\calh_j$ with their canonical embeddings into $\calh$. Since $D_\alpha$ is block diagonal, it is clear that
\[
\langle D_\alpha \zeta, \eta\rangle_\calh = 0 \qquad \forall \alpha.
\]
In particular, weak$^*$ convergence implies that
\[
0 = \langle D_\alpha\zeta, \eta\rangle_\calh \to \langle T\zeta, \eta\rangle_\calh
\]
so that 
\[
\langle T\zeta, \eta\rangle_\calh = 0.
\]
Since $\zeta, \eta$ were arbitrary, we have that $TE_j\calh \perp E_i\calh$. Thus, $T_{i,j} = E_iTE_j = 0$, and $T$ has no non-zero off-diagonals.

For the diagonal blocks, let us fix a $k \in \N$, fix an orthonormal basis $\zeta_1, \ldots, \zeta_{d_k}$ of $\calh_k$, and assume $D_\alpha = \lambda_k^{(\alpha)} I_{\calh_k}$ on $\calh_k$. For $i, j \in \{1, \ldots, d_k\}$, we have
\[
    \langle D_\alpha \zeta_i, \zeta_j\rangle_\calh = \langle\lambda^{(\alpha)}_k \zeta_i, \zeta_j\rangle_\calh = \begin{cases}
        0 &\text{if } i \ne j\\
        \lambda_k^{(\alpha)} & \text{if } i = j.
    \end{cases}
\]
Taking limits, we obtain
\[
    \langle T\zeta_i, \zeta_j\rangle_\calh = \begin{cases}
        0 &\text{if } i \ne j\\
        \lambda_k & \text{if } i = j.
    \end{cases}
\]
for some scalar $\lambda_k := \lim_\alpha \lambda^{(\alpha)}_k$. But this implies that
\[
\langle T\zeta_i, \zeta_j\rangle_\calh = \langle TE_k\zeta_i, E_k\zeta_j\rangle_\calh = \langle E_kTE_k\zeta_i, \zeta_j\rangle_\calh = \langle T_{kk}\zeta_i, \zeta_j\rangle_\calh =\begin{cases}
        0 &\text{if } i \ne j\\
        \lambda_k & \text{if } i = j.
    \end{cases}
\]
So $T_{kk} = \lambda_kI_{\calh_k}$ and thus,
\[
T = \bigoplus_{k \geq 1}\lambda_kI_{\calh_k},
\]
with
\[
|\lambda_k| = |\langle T\zeta_j, \zeta_j\rangle| \le \norm{T}_{B(\calh)} <\infty,
\]
where $\zeta_j$ is an orthonormal basis vector in $\calh_k$. So $\{\lambda_k\}_{k \ge 1} \in \ell_\infty$ and $T \in \cald$; i.e., $\cald$ is weak$^*$ closed. Consequently, $\ball(\cald)$ is weak$^*$ closed.

Applying Theorem \ref{thm:oikhberg-specialized} to $\calh$ and $\cald$, we obtain an operator-space
structure $X$ on the same underlying Hilbert space $\calh$ and constants
$C_0,C_1>0$ such that:
\begin{enumerate}
    \item if $D\in\mathcal D$ and $\|D\|_{B(\calh)}\leq1$, then
    \begin{equation}\label{eq:block-complete-contraction}
        \|D\|_{\mathrm{cb}}\leq1;
    \end{equation}
    \item if $T\in CB(X)$, then
    \begin{equation}\label{eq:oikhberg-decomposition-guide}
        T=D+S,
        \qquad
        D\in\mathcal D,\quad S\in S_2(\calh),
    \end{equation}
    where
    \begin{equation}\label{eq:oikhberg-decomposition-bounds-guide}
        \|D\|_{B(\calh)}\leq C_0\|T\|_{\mathrm{cb}},
        \qquad
        \|S\|_2\leq C_1\|T\|_{\mathrm{cb}}.
    \end{equation}
\end{enumerate}

With the above in hand, we can now prove Theorem \ref{mainresult}.

\subsection{Proof of Theorem \ref{mainresult}}
The following elementary observation will be useful in the proof.

\begin{lemma}\label{lem:rank-hs-guide}
Let $P$ be a finite-rank idempotent on a Hilbert space and let $Q$ be a
finite-rank orthogonal projection. Then
\[
    |\operatorname{rank}P-\operatorname{rank}Q|
    \leq
    \|P-Q\|_2^2.
\]
\end{lemma}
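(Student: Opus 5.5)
The plan is to express both ranks as traces and then rewrite the trace difference as a quantity that is quadratic in $D:=P-Q$. Since $P$ is a finite-rank idempotent and $Q$ a finite-rank orthogonal projection, we have $\operatorname{rank}P=\operatorname{Tr}P$ and $\operatorname{rank}Q=\operatorname{Tr}Q$. Hence the claim reduces to
\[
|\operatorname{Tr}D|\leq\|D\|_2^2 .
\]
Here $D$ is of finite rank, so $D$ and $D^2$ are trace class and every cyclic manipulation of traces below is legitimate.

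\emph{Splitting the trace.} First write
\[
\operatorname{Tr}P-\operatorname{Tr}Q=\operatorname{Tr}\bigl(P(I-Q)\bigr)-\operatorname{Tr}\bigl((I-P)Q\bigr),
\]
since the $\operatorname{Tr}(PQ)$ terms cancel. Next express each factor through $D$. Direct computation gives
\[
P(I-Q)=D(I-Q),\qquad (I-Q)P=(I-Q)D,\qquad (I-P)Q=-DQ,\qquad Q(I-P)=-QD .
\]

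\emph{Each term is quadratic in $D$.} Using $P=P^2$ and $(I-Q)=(I-Q)^2$ together with cyclicity,
\[
\operatorname{Tr}\bigl(P(I-Q)\bigr)=\operatorname{Tr}\bigl((I-Q)P\cdot P(I-Q)\bigr)=\operatorname{Tr}\bigl((I-Q)D\,D(I-Q)\bigr)=\operatorname{Tr}\bigl((I-Q)D^2\bigr).
\]
In the same way, using $(I-P)^2=I-P$ and $Q^2=Q$,
\[
\operatorname{Tr}\bigl((I-P)Q\bigr)=\operatorname{Tr}\bigl(Q(I-P)\cdot(I-P)Q\bigr)=\operatorname{Tr}\bigl(QD\,DQ\bigr)=\operatorname{Tr}\bigl(QD^2\bigr).
\]
Subtracting the two identities yields
\[
\operatorname{Tr}D=\operatorname{Tr}\bigl((I-2Q)D^2\bigr).
\]

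\emph{Conclusion.} Because $Q$ is an orthogonal projection, $I-2Q$ is a self-adjoint unitary and so has operator norm $1$. Therefore
\[
|\operatorname{Tr}D|\leq\|I-2Q\|\,\|D^2\|_1\leq\|D\|_2^2,
\]
using $\|D^2\|_1\leq\|D\|_2\|D\|_2$ (Hölder). This gives $|\operatorname{rank}P-\operatorname{rank}Q|\leq\|P-Q\|_2^2$.

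The only real obstacle is that $P$ need not be self-adjoint, so $\|D\|$ may exceed $1$. This rules out the naive route through $\operatorname{Tr}D=\operatorname{Tr}D^3$, which would leave an uncontrolled factor of $\|D\|$. The key step is therefore finding the identity $\operatorname{Tr}D=\operatorname{Tr}((I-2Q)D^2)$, in which only the orthogonality of $Q$ is used to control the norm.
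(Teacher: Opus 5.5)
Your proof is correct, and it takes a genuinely different route from the paper's.

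\textbf{The paper's route.} The paper counts dimensions. If $\operatorname{rank}P=n>m=\operatorname{rank}Q$, rank--nullity applied to $Q|_{\operatorname{Ran}P}$ gives an $(n-m)$-dimensional subspace $L\subset\operatorname{Ran}P\cap\ker Q$ on which $P-Q$ acts as the identity. Summing $\|(P-Q)\zeta_j\|^2$ over an orthonormal basis of $L$ then gives $\|P-Q\|_2^2\geq n-m$. The case $m>n$ is symmetric, using $(P-Q)x=-x$ on $\operatorname{Ran}Q\cap\ker P$. This needs only rank--nullity and the definition of the Hilbert--Schmidt norm. It never uses $Q=Q^*$, so it works for any pair of finite-rank idempotents.

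\textbf{Your route.} You prove the exact identity $\operatorname{Tr}(P-Q)=\operatorname{Tr}\bigl((I-2Q)(P-Q)^2\bigr)$. This requires trace-class machinery: $\operatorname{rank}P=\operatorname{Tr}P$ for a possibly non-self-adjoint idempotent, cyclicity of the trace with one bounded factor, and Schatten--H\"older. It also uses the orthogonality of $Q$ essentially, through $\|I-2Q\|=1$. In exchange you get a slightly sharper conclusion, $|\operatorname{rank}P-\operatorname{rank}Q|\leq\|(P-Q)^2\|_1\leq\|P-Q\|_2^2$.
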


\begin{proof}
Suppose first that
\[
    \operatorname{rank}P=n>m=\operatorname{rank}Q.
\]
Consider
\[
    Q:\operatorname{Ran}P\longrightarrow\operatorname{Ran}Q.
\]
Its kernel has dimension at least $n-m$. Thus there is an
$(n-m)$-dimensional subspace
\[
    L\subset\operatorname{Ran}P\cap\ker Q.
\]
For $x\in L$,
\[
    Px=x,\qquad Qx=0,
\]
and hence
\[
    (P-Q)x=x.
\]
If $\{\zeta_j\}_{j=1}^{n-m}$ is an orthonormal basis of $L$, then
\[
    \|P-Q\|_2^2
    \geq
    \sum_{j=1}^{n-m}
    \|(P-Q)\zeta_j\|^2
    =
    n-m.
\]
If instead $m>n$, apply the same argument to
\[
    P:\operatorname{Ran}Q\longrightarrow\operatorname{Ran}P.
\]
On the kernel of this map,
\[
    (P-Q)x=-x.
\]
\end{proof}
We can now prove Theorem \ref{mainresult}.

\begin{proof}[Proof of Theorem \ref{mainresult}]
Let $X$ be the operator space structure on $\calh$ obtained in Section \ref{sec:Oikhberg} by applying Theorem \ref{thm:oikhberg-specialized} to $\calh$ and $\cald$. Recall that letting
\[
    R_N
    =
    I_{\calh_1}\oplus\cdots\oplus I_{\calh_N}
    \oplus0\oplus\cdots, 
\]
we have
\[
    R_N\in\mathcal D
    \qquad\text{and}\qquad
    \|R_N\|_{B(\calh)}=1.
\]
Hence \eqref{eq:block-complete-contraction} gives
\[
    \|R_N\|_{\mathrm{cb}}\leq1.
\]
Moreover, each $R_N$ has finite rank, and
\[
    R_N\zeta\to \zeta
    \qquad(\zeta\in \calh).
\]
Therefore $X$ has the 1-CBAP.

We now claim that our space $X$ cannot have a cb-basis. Assume for a contradiction that it does. Let $P_n$ denote its partial-sum projections. Then
\begin{equation}\label{eq:basis-projection-properties-guide}
    P_n^2=P_n,
    \qquad
    \operatorname{rank}P_n=n,
\end{equation}
and there exists $K<\infty$ such that
\begin{equation}\label{eq:uniform-basis-cb-bound-guide}
    \|P_n\|_{\mathrm{cb}}\leq K
    \qquad(n\geq1).
\end{equation}
Apply the Oikhberg decomposition to each $P_n$:
\[
    P_n=D_n+S_n,
\]
where
\[
    D_n\in\mathcal D,
    \qquad
    S_n\in S_2(\calh).
\]
The uniform cb-bound gives, for some $C_0, C_1 > 0$, 
\begin{equation}\label{eq:uniform-D-S-bounds-guide}
    \|D_n\|_{B(\calh)}\leq C_0K,
    \qquad
    \|S_n\|_2\leq C_1K.
\end{equation}
Thus every basis projection is uniformly Hilbert--Schmidt close to a
block-scalar operator. Now, 
\[
    (D_n+S_n)^2=D_n+S_n.
\]
Rearranging,
\begin{equation}\label{eq:Dn-approximate-idempotent-guide}
    D_n^2-D_n
    =
    S_n-D_nS_n-S_nD_n-S_n^2.
\end{equation}
Let
\[
    A:=C_0K,\qquad B:=C_1K.
\]
For $R \in B(\calh)$ and $S\in S_2(\mathcal H)$, using
\[
    \|RS\|_2\leq\|R\|_{B(\calh)}\|S\|_2
\]
and
\[
    \|S^2\|_2
    \leq
    \|S\|_{B(\calh)}\|S\|_2
    \leq
    \|S\|_2^2,
\]
we obtain
\[
    \|D_n^2-D_n\|_2
    \leq
    B+2AB+B^2.
\]
Thus there is a constant $\Theta$, independent of $n$, such that
\begin{equation}\label{eq:theta-guide}
    \|D_n^2-D_n\|_2\leq\Theta.
\end{equation}
Let
\[
    D_n
    =
    \bigoplus_{k \geq 1}
    \lambda_{n,k}I_{\calh_k}.
\]
For each $n,k$, choose
\[
    \varepsilon_{n,k}\in\{0,1\}
\]
to be whichever of $0$ and $1$ is closer to $\lambda_{n,k}$. If $|\lambda_{n,k}| = |\lambda_{n,k} -1|$, then choose $\varepsilon_{n,k} = 0$. Define
\[
    Q_n
    :=
    \bigoplus_k
    \varepsilon_{n,k}I_{\calh_k}.
\]
Then $Q_n$ is an orthogonal block projection.
Since the square of the Hilbert-Schmidt norm of a diagonal operator is the $\ell^2$-sum of its diagonal entries and since $\norm{I_{\calh_k}}^2_2 = \operatorname{Tr}(I_{\calh_k})= d_k$, we observe that
\begin{align*}
    \|D_n-Q_n\|_2^2
    &=\sum_{k \ge 1}
    d_k|\lambda_{n,k}-\varepsilon_{n,k}|^2\\
    &\leq
    4
    \sum_{k \ge 1}
    d_k|\lambda_{n,k}(\lambda_{n,k}-1)|^2\\
    &=
    4\|D_n^2-D_n\|_2^2,
\end{align*}
where the inequality comes from the fact that
\[
|\lambda_{n,k} - \varepsilon_{n,k}| = \min\{|\lambda_{n,k}|, |\lambda_{n,k} -1|\},
\]
and  
\[
\min\{|z|,|z-1|\} \leq 2|z(z-1)|
\]
for all $z \in \C$. This inequality can easily be seen from the fact that
\[
    \max\{|z|, |z-1|\} \ge 1/2
\]
for all $z \in \C$. Thus,
\begin{equation}\label{eq:Dn-Qn-guide}
    \|D_n-Q_n\|_2\le
    2\norm{D_n^2 -D_n}_2\leq2\Theta.
\end{equation}
As a consequence,
\[
    \|P_n-Q_n\|_2
    \leq
    \|S_n\|_2+\|D_n-Q_n\|_2
    \leq
    B+2\Theta.
\]
Let $R:= B + 2\Theta$. Then
\begin{equation}\label{eq:Pn-Qn-guide}
    \|P_n-Q_n\|_2\leq R
    \qquad\text{for all }n.
\end{equation}
This shows that a uniformly cb-family of idempotents is forced to remain within a fixed Hilbert--Schmidt distance of the family of block projections.

We now claim that the operators $Q_n$ have finite rank. First observe that since $P_n$ has finite rank and $S_n\in S_2(\calh)$, we must have $D_n = P_n - S_n \in S_2(\calh)$. Now suppose that $\varepsilon_{n,k} = 1$. Then
\[
    |\lambda_{n,k} - 1| \le |\lambda_{n,k}|
\]
by definition, so that
\[
|\lambda_{n,k}| \ge 1/2.
\]
We know that
\[
\norm{D_n}^2_{2} = \sum_{k \ge 1}d_k|\lambda_{n,k}|^2 < \infty.
\]
In particular, there can only be finitely many $\varepsilon_{n,k}$ which satisfy $\varepsilon_{n,k} = 1$ and so $Q_{n}$ has finite rank. More specifically,
\begin{equation}\label{eq:rank-Qn-subset-sum-guide}
\operatorname{rank}Q_n
    =
    \sum_{k\in F_n}4^k
    \in\Sigma
\end{equation}
for some finite set $F_n \subset \N$.

Now, apply Lemma~\ref{lem:rank-hs-guide} to $P_n$ and $Q_n$. From
\eqref{eq:basis-projection-properties-guide} and
\eqref{eq:Pn-Qn-guide},
\[
    |n-\operatorname{rank}Q_n|
    \leq R^2.
\]
Using \eqref{eq:rank-Qn-subset-sum-guide}, we conclude that
\begin{equation}\label{eq:distance-to-Sigma-guide}
    \operatorname{dist}(n,\Sigma)\leq R^2
    \qquad\text{for every }n\geq1.
\end{equation}
However, observe that the largest subset sum using only
\[
    4,4^2,\ldots,4^k
\]
is
\[
    4+4^2+\cdots+4^k
    =
    \frac{4^{k+1}-4}{3}.
\]
Any subset sum which uses the next block has size at least $4^{k+1}$. Therefore the interval
\begin{equation}\label{eq:subset-sum-gap-guide}
    \left(
        \frac{4^{k+1}-4}{3},
        4^{k+1}
    \right)
    \cap\Sigma
    =
    \varnothing.
\end{equation}
The length of this interval is
\[
    4^{k+1}
    -
    \frac{4^{k+1}-4}{3}
    =
    \frac{2\cdot4^{k+1}+4}{3} \to \infty
\]
as $k \to \infty$. Hence
\[
    \sup_{n\geq1}\operatorname{dist}(n,\Sigma)=\infty.
\]
This contradicts \eqref{eq:distance-to-Sigma-guide}, and shows our operator space $X$ cannot have a cb-basis. This concludes the proof of Theorem \ref{mainresult}.
\end{proof}
\bigskip

\noindent{\bf Acknowledgments}

D.G. was partially supported by NSF grant \#2350067.

\bigskip

\noindent{\bf AI disclosure statement.} The authors acknowledge the use of ChatGPT by OpenAI to assist in exploring and refining some of the arguments presented in this work. The authors take full responsibility for the content of the paper.


\begin{thebibliography}{99}

\bibitem{CasazzaHanLarson1999}
P.~G.~Casazza, D.~Han, and D.~R.~Larson,
\emph{Frames for Banach spaces},
in \emph{The Functional and Harmonic Analysis of Wavelets and Frames},
Contemp. Math., vol.~247,
Amer. Math. Soc., Providence, RI, 1999, 149--182.

\bibitem{DiestelJarchowTonge1995}
J.~Diestel, H.~Jarchow, and A.~Tonge,
\emph{Absolutely Summing Operators},
Cambridge Studies in Advanced Mathematics, vol.~43,
Cambridge University Press, Cambridge, 1995.

\bibitem{EffrosRuan1990}
E.~G.~Effros and Z.-J.~Ruan,
\emph{On approximation properties for operator spaces},
Internat. J. Math. \textbf{1} (1990), 163--187.

\bibitem{EffrosRuan2000}
E.~G.~Effros and Z.-J.~Ruan,
\emph{Operator Spaces},
London Mathematical Society Monographs, New Series, vol.~23,
Oxford University Press, Oxford, 2000.

\bibitem{GustafsonRao}
K.~E.~Gustafson and D.~K.~M.~Rao,
\emph{Numerical Range: The Field of Values of Linear Operators and Matrices},
Universitext, Springer-Verlag, New York, 1997.

\bibitem{JungeNielsenRuanXu2004}
M.~Junge, N.~J.~Nielsen, Z.-J.~Ruan, and Q.~Xu,
\emph{$COL_p$ spaces---the local structure of non-commutative
$L_p$ spaces},
Adv. Math. \textbf{187} (2004), 257--319.

\bibitem{LiuRuan2016}
R.~Liu and Z.-J.~Ruan,
\emph{Cb-frames for operator spaces},
J. Funct. Anal. \textbf{270} (2016), 4280--4296.

\bibitem{Oikhberg2005}
T.~Oikhberg,
\emph{Operator spaces with prescribed sets of completely bounded maps},
J. Funct. Anal. \textbf{224} (2005), 296--315.

\bibitem{Oikhberg2007}
T.~Oikhberg,
\emph{Hyperreflexivity and operator ideals},
J. Funct. Anal. \textbf{246} (2007), 242--280.

\bibitem{Oikhberg2009}
T.~Oikhberg,
\emph{Representations of Banach algebras as algebras of completely
bounded maps},
Math. Scand. \textbf{105} (2009), 99--120.

\bibitem{OikhbergRicard2004}
T.~Oikhberg and \'E.~Ricard,
\emph{Operator spaces with few completely bounded maps},
Math. Ann. \textbf{328} (2004), 229--259.

\bibitem{Paulsen2002}
V.~Paulsen,
\emph{Completely Bounded Maps and Operator Algebras},
Cambridge Studies in Advanced Mathematics, vol.~78,
Cambridge University Press, Cambridge, 2002.

\bibitem{Pisier2003}
G.~Pisier,
\emph{Introduction to Operator Space Theory},
London Mathematical Society Lecture Note Series, vol.~294,
Cambridge University Press, Cambridge, 2003.


\bibitem{Szarek1987}
S.~J.~Szarek,
\emph{A Banach space without a basis which has the bounded approximation property},
Acta Math. \textbf{159} (1987), 81--98.
\end{thebibliography}
\end{document}